\theoremstyle{definition}  
\theoremstyle{remark}      
\theoremstyle{plain}       
\newtheorem{theorem}{Theorem}
\newtheorem{proposition}[theorem]{Proposition}
\theoremstyle{remark}
\theoremstyle{definition}
\begin{document}

\title{\first{A Mathematical Model for Vineyard Replacement 
with Nonlinear Binary\\ Control Optimization}}

\setcounter{footnote}{1}

\author{\noindent\large An\'{\i}bal Galindro$^{1}$\footnote{Corresponding author.\\
Email address: anibalg@utad.pt}~, Adelaide Cerveira$^{2}$, Delfim F. M. Torres$^{3}$, 
Jo\~{a}o Matias$^{4}$, Ana Marta-Costa$^{1}$}


\address{\normalsize $^{1}$Centre for Transdisciplinary Development Studies, 
University of Tr\'as-os-Montes and Alto Douro,\\[-2mm] 
Polo II--ECHS, Quinta de Prados, 5000-801 Vila Real, Portugal.\\
$^{2}$INESC-TEC, Department of Mathematics, 
University of Tr\'as-os-Montes and Alto Douro,\\[-2mm] 
Quinta de Prados, 5000-801 Vila Real, Portugal.\\
$^{3}$Center for Research and Development in Mathematics and Applications (CIDMA),\\[-2mm] 
Department of Mathematics, University of Aveiro, 3810-193 Aveiro, Portugal.\\
$^{4}$CMAT-UTAD, Department of Mathematics, University of Tr\'as-os-Montes and Alto Douro,\\[-2mm] 
Quinta de Prados, 5000-801 Vila Real, Portugal.}


\abstract{
\begin{table}[ht!]
\vspace*{-5mm} \doublerulesep 0.05pt \tabcolsep 7.8mm \vspace*{2mm}
\setlength{\tabcolsep}{7.5pt}
\hspace*{-2.5mm}\begin{tabular*}{171.45mm}{r|||||l}
\multicolumn{2}{l}{\rule[-6pt]{171.45mm}{.01pt}}\\
\parbox[t]{6cm}{\small
\vspace*{.5mm}
\hfill {\bf Submission Info}\par
\vspace*{2mm}
\hfill Submitted 30 May 2018 \par 
\hfill Revised 28 July 2018  \par
\hfill Accepted 6 Sept 2018\par
\noindent\rule[-2pt]{6.3cm}{.1pt}\par
\vspace*{2mm}
\hfill {\bf Keywords}\par
\vspace*{2mm}
\hfill Mathematical Modelling\par
\hfill Binary Optimization\par
\hfill Vineyard Replacement\par
\hfill Douro Wine Region}
&
\parbox[t]{10.5cm}{
\vspace*{.5mm}
{\normalsize\bf Abstract}\par
\renewcommand{\baselinestretch}{.8}
\normalsize \vspace*{2mm} {\small Vineyard replacement is a common practice 
in every wine-growing farm since the grapevine production decays over time 
and requires a new vine to ensure the business sustainability. In this paper, 
we formulate a simple discrete model that captures the vineyard's main dynamics 
such as production values and grape quality. Then, by applying binary non-linear 
programming methods to find the vineyard replacement trigger, 
we seek the optimal solution concerning different governmental 
subsidies to the target producer.}
\par
\par
\par
\hfill{\ }}\\[-4mm]
&\\
\multicolumn{2}{l}{\rule[15pt]{171.45mm}{.01pt}}\\\end{tabular*}
\vspace*{-7mm}
\end{table}
}

\maketitle
\thispagestyle{fancy}


\renewcommand{\baselinestretch}{1}
\normalsize


\section{Introduction}

\noindent Laying in the south western part of Europe, a significant portion of 
Portugal's economy substantially relies on prolific tourism endeavours 
and solid agricultural systems. It is fair to say that the wine sector is one 
of the most important Portuguese agricultural sub-sectors, mainly due to its solid 
traditional background, as bulky statistics display on official records. 
In fact, the International Organisation of Vine and Wine (OIV) caps Portugal 
as the $11^{th}$ greatest vineyard area and the ninth wine exporter (in value) 
worldwide OIV \cite{1}. Even though these numbers don't seem too impressive, 
they represent a convincing effort taking into account that Portugal's territory 
is quite limited. Apart from the pragmatic worldwide rankings, the Portuguese vineyards 
deliver way more than a simple grape output since the oldest demarcated region 
was classified by UNESCO as a World Heritage in 2001. The country is well aware 
of this valuable heritage and the past few years revealed a set of meaningful 
efforts aiming the overall improvement of this sector. These attempts are ranged 
from direct investment (vineyard renewal, infrastructures) to academic oriented 
studies that intend to attain higher sustainability \cite{2} or efficiency levels \cite{3}. 
Aside from the more immutable orographic and edaphoclimatic factors, a vineyard holds 
a very complex set of variables that may be optimized in order to attain a more prolific output. 
The scope of our work is focused primarily on the vineyard replacement timing 
and inner multi-plot optimization. Therefore, the extensive set of components 
that characterize the entire vineyard system are omitted here. 

There is no such thing as a static agricultural system: every crop needs fertilization, 
rotation or replacement. Vineyards are not an exception whatsoever. 
The core subject of this article falls deeply into a pure optimization effort: 
we intend to provide optimal multi-plot vineyard replacement 
in a given time-frame. For that a mathematical model is proposed.
Several optimization tools to approach the problem were evaluated. 
Binary non-linear optimization methods 
were selected due to their suitability. 
    
The remainder of this article is organized as follows.
Section~\ref{LR} gives account of previous work, while Section~\ref{data} 
presents problem's function fitting preceded by a brief description 
of the collected data. Our model and methodology is described in 
Section~\ref{methodology}, followed by the presentation of the numerical 
results and discussion in Section~\ref{results}.
Finally, Section~\ref{conclusions} gives the main conclusions of our article.


\section{Literature Review}
\label{LR}

\noindent It is a fair assumption to state that farmers, traditionally, 
adapt their judgement in agricultural planning  based on their previous 
experience \cite{4}. However, the last century prolific innovation increased 
the agricultural productivity massively, making this sector a contributor 
to the overall economic development of several countries \cite{5}. Even though 
the simplistic and traditional approaches from casual familiar farms are usually 
completely outclassed (productively) by the huge well-structured and highly 
technological explorations, there is still room to the small producers that 
are willing to enter the market and attain a marginal revenue from the activity \cite{6}. 
As stated on previous section, available bibliography compile several and 
distinctive optimization attempts, generally adapted to the agricultural 
subset that the authors intend to study. Mixed-integer programming planning (MIP) 
can be found in models developed by authors such as Masini \cite{7}, 
with an application to the fruit industry. Troncoso and Garrido \cite{8} 
applied MIP to forestry productions with logistic optimization, while
Fonseca, Cerveira and Mota \cite{9} developed a forest thinning 
and clear-cutting model considering a forthcoming five year planning horizon. 
Jena and Poggi \cite{10} adopt an MIP model to study and identify 
an harvest planning in the Brazilian sugar cane industry. Regarding 
specifically the wine sector, Ferrer et al. \cite{11} apply MIP 
to obtain an optimal schedule for wine grape harvesting. Because, 
frequently, the agricultural systems intend to maximize/minimize 
several variables simultaneously (e.g., a farmer may intend 
to maximize production while minimizing pollution or environmental impact), 
the multi-objective optimization approach is quite common \cite{12}. 
Groot et al. \cite{13} apply multi-objective optimization and prove 
its suitability in mixed farm design, accounting several production 
and environmental variables. Agricultural subsistence is also highlighted 
on the multi-objective optimization models of Klein et al. \cite{14} 
and Banasik et al. \cite{15}, which respectively intend to adapt 
the farm system to climate changes and evaluate closed loops on supply 
chains.\footnote{Closed-loop supply chains are often defined 
with integrated management of processes that aim to reintroduce 
returned products, parts and materials into the supply chain.} 
The willingness to optimize a given system can also be attained 
with more straightforward approaches. Millar et al. \cite{16} 
and Garc\'{\i}a-D\'{\i}az et al. \cite{17} use sampling methods, 
index formulation and comparative analysis to pheromone baited-traps 
and soil carbon optimization, respectively. Patakas, Noitsakis and Chouzouri \cite{18} 
analyse the relationship between vine transpiration and water stress 
in order to optimize irrigation patterns. 
Atallah et al. \cite{19} and Ricketts et al. \cite{20} put their efforts into 
the optimal grapevine disease control in New York and California vineyards, respectively. 
The optimal control (OC) approach bundles multi-period optimization of the dynamic programming 
with the cutting edge suitability to treat a large spectrum of problems \cite{21}. 
The OC methodology became quite popular with applications in several areas, such as Economics \cite{22,22b}, 
Biology and Health related problems \cite{23,24}. Previous works also blend the OC with other 
methodologies, such as the Maximum-Entropy estimation \cite{25} 
and Deep Learning approximation \cite{26}. Since OC is quite adaptable, 
this theory has also been applied to agricultural endeavours, e.g., 
to overcome invasive species \cite{27}, to control pests in an optimal way \cite{27b},
or to provide real-time greenhouse heating \cite{28}. Logically, since grape growing 
is a subset of the agriculture endeavour, it is expected to see at least a few studies 
of OC dedicated, exclusively, to vineyard challenges. Surprisingly, to the best of our knowledge, 
Schamel and Schubert \cite{29} presents the lonely attempt to bundle OC and vineyard/grapevine systems. 
They develop an OC model to attain the ideal crop thinning in viticulture, 
maximizing grape quality and quantity \cite{29}. Moreover, they give a brief overview of 
optimization methods, noting the broad range of applications to the several subsets of the agricultural entrepreneurship. 
In contrast, our problem seeks a purely binary non-linear programming (BNLP) approach. Even though a few authors, 
such as Silva, Marins and Montevechi \cite{30} or Arredondo-Ram\'{\i}rez et al. \cite{31}, 
actually applied BNLP to the sugar production and the design of water systems in agriculture, 
their problem formulation is still quite different from our own.         


\section{Collected data and function fitting}
\label{data}

\noindent Before getting to the concrete model, this section describes briefly the available 
database and precedent grape quality/quantity function fitting. A face-to-face survey 
was conducted to compile extensive information related to almost every production input 
factor and coming output and revenues. Unfortunately, there are weighty difficulties gathering 
the data. Sometimes, the farmers are not available to provide an accurate response or, 
in the worst case scenario, they do not provide an answer at all. We were able 
to obtain 20 surveys exclusively from the Douro region. These surveys compile bulky 
farm information, from which we only extract four variables: 
vineyard with each inherent plot age and area, 
grape production (in tons), and revenues (in Euro). The age of each vineyard plot 
is irrefutably the key factor of the whole study, because we intend to investigate 
the optimal vineyard replacement age. To fulfil the analysis requirements, 
one needs to associate the grape quality and output per each vineyard age. 
In fact, the combined grape quality and quantity are directly correlated 
to the final revenue of the sampled farm. The main goal is to optimize the 
vineyard replacement timing, accounting that the owner intends to maximize their profit 
on a previously ranged (5, 10, 15 and 60 years) decision time-frame. To accomplish 
that purpose, we need to fit two functions that depend on the vineyard's age: 
Quality ($Q$) and Quantity ($K$). To achieve a trustworthy representation 
of the expected production (in Kg per hectare) for each vineyard age, 
we presume that this function should follow a concave down display. 
Higher forms of nonlinearity could be attempted to better fit the data,  
but it is well known that the grape production rises until a certain amount, 
which may also be steady for a while, and after some age it starts decaying. 
On the other hand, the quality of the grape is expected to steadily rise 
following a simple incremental function equation -- see \eqref{eq:quality}. 
Therefore, the polynomial degree was chosen to describe observable 
features of the vineyard and function fitting.
According to the survey information, it is assumed that the vineyard productivity usually 
rises until a certain age, starting to decay at some point, when the vineyard gets older. 
It is also known that a vineyard does not start its production, from the commercial point of view, 
until it gets 5 years old. Therefore, to penalize our function, a few young vineyards 
with production value of zero were introduced.
We fitted our data with a 2nd degree polynomial function using \textsf{Matlab}, 
obtaining a reasonable goodness of fit, precisely, a $R$-square of 0.8051 (see Appendix). 
Figure~\ref{fig:1} shows the fitted function and the actual productivity data.
\begin{figure}[ht!]
\centering
\includegraphics[scale=0.5]{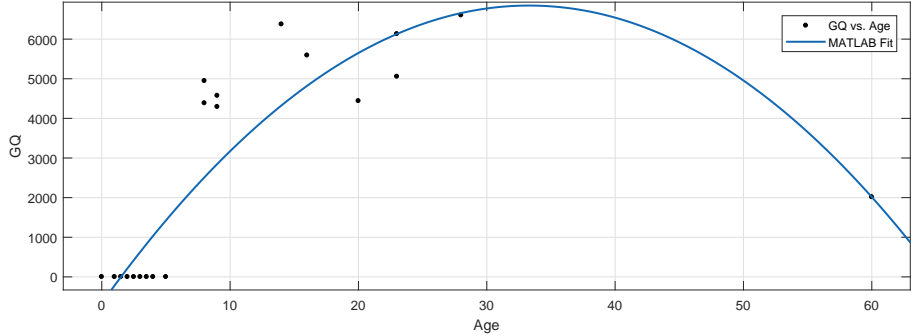}
\caption{Expected vineyard production per hectare (continuous--blue).}
\label{fig:1}
\end{figure}

This work assumes, by acquainting the collected surveys, that the grape quality varies 
along the time span. To get a hint about grape quality, the aforementioned data 
(production per hectare, Productivity) was recycled and linked with the total revenues 
from each farm. Equation \eqref{eq:rev} expresses the resulting grape quality variable ($GQ$):  
\begin{equation}
\label{eq:rev}
GQ = \frac{TotalRevenues}{Productivity}.
\end{equation} 
Following the same premise of the grape quantity per age fitting function, 
it is assumed that the grape quality displays an expected pattern based 
on the collected surveys. This time we infer that the grape quality should linearly 
rise as the vineyard gets older. Performing an Ordinary Least Square regression in 
\textsf{Python} (see Appendix), to relate the quality proxy $GQ$ with the vineyard age, 
we also applied a bootstrap re-sampling of 500 simulations to estimate our standard errors, 
confidence intervals, and to provide a graphical output: see Figure~\ref{fig:2}.
\begin{figure}[ht!]
\centering
\includegraphics[scale=0.5]{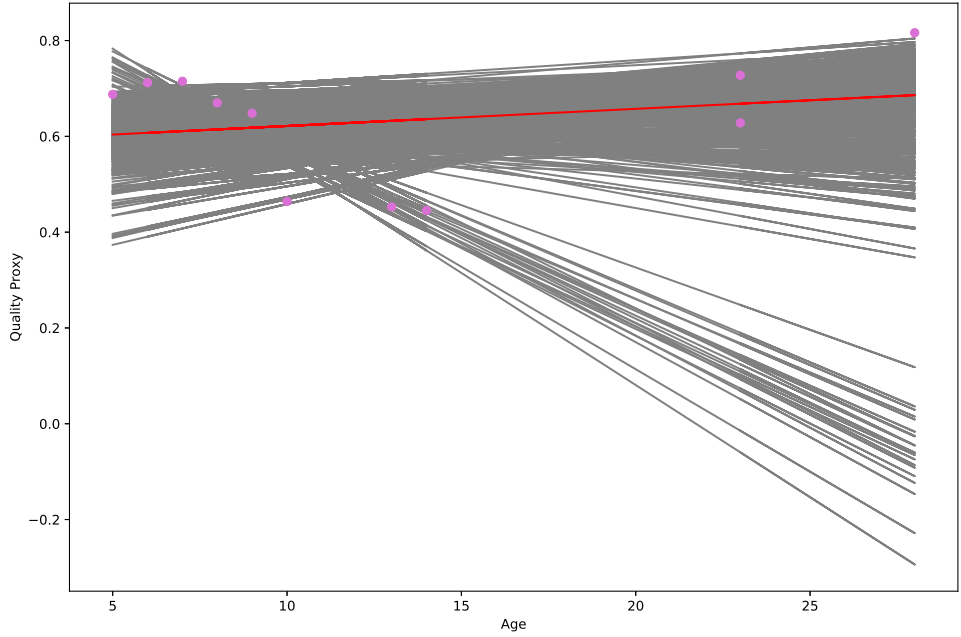}
\caption{Expected grape-quality improvement (continuous--red) obtained 
with 500 resampled bootstrap simulations (continuous--grey).}
\label{fig:2}
\end{figure} 

The limited number of observations and possibly biased data 
(sometimes farmers do not provide accurate information) 
certainly harms our OLS (ordinary least squares) regression results (see Appendix). 
It is unavoidable to state that the obtained results in Appendix
are not perfect, compiling not significant coefficients $\beta_1$ 
and a poor R-squared value. However, it is not our intention to perform 
a model that arbitrarily assumes values. Therefore, the calculated $\beta_1$ 
provides the least of two evils and is used in the next section. 
Quality $Q$ and quantity $K$ functions are computed according to 
\eqref{eq:quality} and \eqref{eq:quantity}, respectively:
\begin{equation}
\label{eq:quality}
Q(t) = 0.0036 I_t,
\end{equation}
\begin{equation}
\label{eq:quantity}
K(t) = -661.4+451.1 I_t-6.774I^{2}_t,
\end{equation}
where $I_t$ represents the vineyard age at time $t$. 
Since the vineyard produces once a year and a continuous decision time-frame 
is not realistic from producer's point of view, 
the problem formulation acquaints a discrete model.
It is worth to note that the intercept of the regression 
is omitted in \eqref{eq:quality}, because we are only interested 
in the evolution of the quality upon vineyard's age.


\section{Methodology}
\label{methodology}

\noindent In order to evaluate a real situation, we have selected a sample farm in the Douro region 
that relies its production on 5 different plots with the correspondent approximate area (in $ha$) 
and initial age per plot given by $A=[4.47,1.45,0.44,1.66,0.5]$ and $I_0=[20,30,11,5,58]$, respectively.   
Let $T=\{0, \ldots, 59\}$ be the planning horizon with 60 one-year periods, where $t=0$ 
corresponds to the first year and $P=\{1,\ldots,5\}$ is the set of plots. Assigned to each plot 
$j \in P$, we consider the following parameters: $Ii_j$, the age of the stand in the first period; 
$A_j$, the area of the stand in $ha$. 

Considering equations~\eqref{eq:quality} and \eqref{eq:quantity}, in our investigations
the following constant values are considered:
\begin{itemize}
\item $qc=0.0036$, the quality constant;
\item $p_0=-661.4$;
\item $p_1=451.1$;
\item $p_2=-6.774$;
\item $Pu=3$, the selling price of the grapes per $Kg$;
\item $S=10000$, cost of replacing the vine in Euro per hectare.
\end{itemize}
We also consider the following variables: $u_j^t$, binary variables taking value 1 if a cutting  
is performed in the plot $j \in P$ in the given period $t \in T$ and 0 otherwise; 
and $Id_j^t$, integer variables represent the age of the plot $j$ in period $t$. 
We intend to maximize \eqref{eq:fo}, which is our model's objective function,
\begin{equation}
\label{eq:fo}
\displaystyle \max \quad  \sum_{t \in T} \sum_{j \in P} \left( 
Pu \cdot qc \cdot Id_j^t \cdot A_j \cdot (p_2 \cdot (Id_j^t)^2 
+ p_1 \cdot Id_j^t+p_0)-S \cdot A_j \cdot u_j^t \right),
\end{equation}
subject to the constraints 
\begin{gather}
Id_j^t=t+Ii_j-\sum_{\ell=0}^{t-1} u_j^\ell \cdot (Id_j^{\ell}+1), 
\quad j \in P, \quad t \in T, \label{eq:id}\\
Id_j^t \in \mathbb{Z}_0^+, \quad  j \in P, \quad t \in T, \label{eq:Id_integer}\\ 
u_j^t   \in \{0,1\}, \quad  j \in P, \quad t \in T, \label{eq:u_binary}
\end{gather}
where \eqref{eq:id} establish the age of each stand $j$ at each period $t$ 
and constraints \eqref{eq:Id_integer} and \eqref{eq:u_binary} acquaint the variables domain.
The objective function \eqref{eq:fo} compiles on the left positive size 
the variables that influence the farm's revenue, such as the grape price ($Pu$) 
and the age dependent grape quality ($qc$), alongside the production values 
given by the polynomial function $K(t)$ underpinned by the Area ($A$). 
The negative right-hand side discloses the expenses (augmented by the Area $A$) 
when the vine is cut. Therefore, the objective function \eqref{eq:fo} aims 
to maximize the farm profit in a given time-frame, optimizing the ideal timing of cutting ($u$).

Since the plots are not correlated with each other, the optimal replacement solution 
can be attained solving the general average profit function $L(t)$, given by \eqref{eq:L},
for a generic single plot:
\begin{equation}
\label{eq:L}
L(t) = K_{Id(t)} \cdot Q_{Id(t)} \cdot A - s \cdot A,
\quad t\in[0,+\infty[ ,
\end{equation}
with
\begin{equation*}
K_{Id(t)} = Pu \cdot qc \cdot Id_t, 
\quad 
Q_{Id(t)} = p_2 \cdot (Id_t)^2 + p_1 \cdot Id_t+p_0.
\end{equation*}
Function \eqref{eq:L} heads up the farming revenues 
minus the fixed costs of renewing the vineyard:
\begin{eqnarray}
Profit \ in \ the \ year \ 'n' 
\rightarrow L'(Id_n) = K_{Id(n)} \cdot Q_{Id(n)} \cdot A,
\end{eqnarray}
\begin{eqnarray}
\label{pf}
Profit \ in \ the \ cycle \ of \ 'N' \ size \rightarrow \hat{L}(N)
=\frac{\sum_{i=0}^{N}L(i) - s \cdot A}{N}.
\end{eqnarray}

\begin{proposition}
\label{prop:01}
The solution is given by $N=59$ when we look to the value 
of $N$ that maximizes $\hat{L}(N)$. 
Replacing a generic vineyard at their 59 years of age, that is, $Id_t=59$,
gives the solution to the optimization problem in an infinite time horizon.  
\end{proposition}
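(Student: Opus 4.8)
The plan is to reduce the infinite-horizon multi-plot problem to a single one-variable maximization, and then to pin down that maximizer by comparing consecutive integer replacement ages.

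First I would decouple the plots. The objective \eqref{eq:fo} is a sum of terms each depending on one plot only, and the age recursion \eqref{eq:id} never links two plots, so it suffices to treat a single generic plot with the infinite-horizon criterion being the long-run average profit per year. Any admissible schedule of cuts splits the years into successive replacement cycles; if in a cycle the vine is grown through ages $0,1,\dots,N$ and then cut, that cycle spans $N+1$ calendar years (which is exactly what \eqref{eq:id} encodes) and contributes $A(\sum_{i=0}^{N}K_{Id(i)}Q_{Id(i)}-s)$ to the total profit, that is, an annual average equal to $\hat{L}$ evaluated at that cycle. The first cycle, fixed by the initial age $Ii_j$, is a bounded transient that washes out in the limit, so the long-run average of any schedule is a weighted average of per-cycle values of $\hat{L}$ and hence does not exceed $\max_N\hat{L}(N)$; this value is attained by the stationary policy that always uses the maximizing cycle. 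Thus the infinite-horizon optimum of a single plot --- and, by the decoupling, of the whole farm --- is exactly the schedule realizing $\max_N\hat{L}(N)$, and the proposition reduces to showing that this is the policy ``cut whenever $Id_t=59$''.

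Next I would identify that maximizer. The per-year revenue $K_{Id(k)}Q_{Id(k)}=Pu\cdot qc\cdot k\,(p_2k^2+p_1k+p_0)$ is a cubic in the age $k$, so summing it over $k=0,\dots,N$ with Faulhaber's formulas makes $\hat{L}$ an explicit elementary function of $N$, of the form $\hat{L}(N)=\varphi(N)-sA/(N+1)$ with $\varphi$ a cubic whose leading coefficient is proportional to $p_2<0$. Then I would study the first difference $\Delta(N):=\hat{L}(N+1)-\hat{L}(N)$, which equals the downward parabola $\varphi(N+1)-\varphi(N)$ plus the strictly positive, strictly decreasing term $sA/[(N+1)(N+2)]$. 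For the stated parameters ($qc=0.0036$, $Pu=3$, $p_0=-661.4$, $p_1=451.1$, $p_2=-6.774$, $s=10000$) the parabolic part is positive for every integer $N$ up into the late fifties, keeping $\hat{L}$ strictly increasing; once it turns negative the positive term still dominates for one more step, so that $\Delta(58)>0$, while $\Delta(59)<0$, and since from there on the parabola is decreasing and the extra term only shrinks, $\Delta(N)<0$ for every $N\ge 59$. Hence $\hat{L}$ is strictly unimodal with unique maximum at $N=59$; combined with the first step, cutting every plot at age $59$ maximizes the long-run average profit of each plot and therefore of the farm, which is the assertion.

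The hard part will be the boundary comparison in the second step. The continuous relaxation of $\hat{L}$ is maximized roughly midway between $58$ and $59$, so those two candidate ages are almost tied and no crude rounding is conclusive: the strict inequality $\hat{L}(59)>\hat{L}(58)$ must be read off from the exact value of $\Delta(58)$. Tied to this, one must first fix unambiguously the year-counting convention behind \eqref{pf} --- whether a cycle whose vine is cut at age $N$ is charged $N$ years or $N+1$ --- because the two conventions move the optimum by exactly one; once the convention consistent with the recursion \eqref{eq:id} is adopted, the rest of the argument is a finite and routine verification.
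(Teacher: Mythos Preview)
Your argument is correct in outline and is genuinely different from the paper's. You reduce the infinite-horizon problem to a renewal-reward maximization of $\hat{L}(N)$ and then locate the maximizer analytically via the first difference $\Delta(N)$, handling the delicate $N=58$ versus $N=59$ comparison explicitly. The paper does neither of these things. It takes the value $N=59$ as given (effectively from a numerical evaluation of $\hat{L}$) and devotes the proof to a different point: showing that over the $60$-year window no policy with two or more cuts can beat the best single-cut policy. Its argument is a one-line cost bound: the largest possible swing between any two single-year revenues $L(Id_V)-L(Id_S)\approx 2888$ is smaller than the cost $c=10000$ of one additional cut, so every extra cut is strictly unprofitable regardless of how it reshuffles the age sequence. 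That is, the paper compares $\max_t L(Id_t)-\min_t L(Id_t)$ with $(b-1)c$ for $b\ge 2$ and observes $0>2885.66-(-2.34)-10000$.

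What each approach buys: the paper's bound is very short and parameter-robust --- it works for any revenue profile whose annual range is below the replacement cost --- but it does not itself establish that $59$ is the argmax of $\hat{L}$, and its logic is tied to the finite $60$-period window rather than a genuine infinite-horizon limit. Your route is more complete: the renewal-reward reduction actually justifies why the stationary policy is optimal on the infinite horizon, and your unimodality analysis pins down $N=59$ rather than assuming it. The price is the fiddly boundary check you already flagged, together with the need to fix the cycle-length convention in \eqref{pf} (denominator $N$ versus $N{+}1$); the paper simply sidesteps both issues.
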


\begin{proof}
\label{proof1}
Consider a single plot. Assume
\begin{eqnarray*}
P(Id_t) = Pu \cdot qc \cdot Id_t \cdot A 
\cdot \left(p_2 \cdot (Id_t)^2 + p_1 \cdot Id_t+p_0\right) 
\end{eqnarray*}
and
\begin{eqnarray*}
c \cdot u_t = S \cdot A \cdot u_t. 
\end{eqnarray*}
Being $\sum_{i=0}^{59} u^{*}_i=1$ the optimal replacement control for a single cut, 
we want to prove that for all $I_t$, $t\in[0,59]$, does not exist
$\sum_{i=0}^{59} u'_i \geq 2$ such that
\begin{eqnarray*}
P(Id^{*}_t) - c \sum_{i=0}^{59} u^{*}_t >  P(Id_t) - c \sum_{i=0}^{59} u'_t,
\end{eqnarray*}
\begin{eqnarray*}
P(Id^{*}_t) >  P(Id_t) - (b-1) \cdot c, \text{ with } \ b \in \{2,3,\ldots,60\},
\end{eqnarray*}
where $L(Id^{*}_t)$ acquaints a full 59 year optimal cycle:
\begin{equation*}
L(Id^{*}_t) + L(Id_S) >  L(Id^{*}_t) + L(Id_S)  - (b-1) \cdot c, 
\text{ with } \ S \in \{0,1,2,\ldots,59\},
\quad V \in \{0,1,2,\ldots,59\},
\end{equation*} 
\begin{eqnarray*}
L(Id_S) > L(Id_V)  - (b-1) \cdot c.
\end{eqnarray*}
Assuming that $L(Id_S)=\min\{L(Id_t)\}$ and $L(Id_V)=\max\{L(Id_t)\}$, 
which is the maximum revenue range among two farm status, we have
\begin{eqnarray*}
0 > L(Id_V)-L(Id_S)  - (b-1) \cdot c.
\end{eqnarray*}
Replacing with real values, and because the minimal $b$ is $b=2$, we get
\begin{eqnarray*}
0 > 2885.66-(-2.34) - 10000, 
\end{eqnarray*}
\begin{eqnarray*}
0 > -7112.
\end{eqnarray*}
The proof is complete.
\end{proof}


\section{Results and Discussion}
\label{results}

\noindent The numerical simulations were performed in the 
FICO Optimization Xpress software with the appended code 
written in \textsf{Mosel} \cite{Mosel:32} (see Appendix).
Due to the high non-linearity of the model, underpinned by the polynomial in the 
objective function and the variables $Id^{t}_j$ and $u^{t}_j$, the software execution time 
exponentially rises with the time-frame. The maximum horizon $t\in[0,59]$ 
optimal solution was performed iteratively for a single cut $\sum_{i=0}^{t} u_i=1$ 
or none $\sum_{i=0}^{t} u_i=0$ (61 combinations) and validated with Proposition~\ref{prop:01}.
The shorter time-frame optimizations by the farmer were performed directly with the FICO Optimization Xpress 
software with stepwise simulations until the final time $t=59$, changing the model initial conditions at each step
(see Appendix). The producers total yield and plot replacement age, per each considered time-frame, 
are stated on Table~\ref{table1}. 
\begin{table}[ht!]
\doublerulesep 0.1pt
\tabcolsep 7.8mm
\centering
\caption{\rm Vineyard replacement age per plot, for each decision time-frame, 
and the resulting total yield in Euro.}
\vspace*{2mm}
\renewcommand{\arraystretch}{1.3}
\setlength{\tabcolsep}{20pt}
\footnotesize{\begin{tabular*}{16.5cm}{ccccccc}
\hline\hline\hline
 Age & Plot 1 & Plot 2 &Plot 3 & Plot 4 & Plot 5 & Total Yield \\
\hline
5 Years & 70 & 70 & None & None & 73 & 691238,21 \\
10 Years & 71 & 71 & None & None & 69 & 686398,61 \\
15 Years & 59 & 60 & 66 & None & 64 & 782085,19  \\
60 Years & 61 & 44 & None & None & 58 & 793114,13  \\
Infinite & 59 & 59 & 59 & 59 & 59 & 755712,99 \\
\hline\hline\end{tabular*}}
\renewcommand{\arraystretch}{1}
\label{table1}
\end{table}
As expected, when the producer is not able to foresee, in a sufficiently large time-frame decision, 
the overall yield is significantly smaller than in a broad optimization period. 
The difference between optimizing in a 15 years time-frame and 5 or 10 years is quite obvious 
since the larger period outperforms the shorter ones in roughly 100 000 Euro over the 60 year analysis. 
The optimal global solution (60 years) caps only a marginally better result than the stepwise 15 year analysis, 
while the infinite horizon solution (IHS) acquaints a sensibly average result. The IHS operates naively, 
cutting the vineyard every time it reaches the age of 59 years old, maximizing the average profit function. 
That routine does not consider the ending of the simulation, performing unessential vineyard replacements. 
Nonetheless, if the producer does not have a sufficiently large planning time-frame, 
it is better to rely his decisions on the IHS than short term optimizations. Another interesting 
result, is that the shorter time-frame of 5 years planning, slightly outperforms the 10 years solution. 
This result might be explained by the circumstantial initial conditions of the model. 

Even though we find this problem interesting from the producers point of view, 
we want to evaluate this problem considering the governmental available policies. 
It is well known that the Portuguese government performs regular support to the
Douro grape producers, usually providing a beneficial price and allowing the producer 
to sell their grapes at higher prices. There has also been recent policies intended 
to provide support for vineyard replacement efforts. 

We now assume that the government fully supports every vineyard replacement 
from the producer. Sub-sequentially, \eqref{pf} is solved without the associated 
fixed costs $s A$. We gather a new optimal infinite horizon solution with the ideal 
replacement age at 49 years old, which yields a better average profit due to the absence 
of fixed and improved replacement endeavors. The results of Table~\ref{table2} display 
the trivial producer improvement (A) due to the governmental aid upon the helpless (B), 
considering the IHS average yield (Avg Yield), average replacement costs (Avg RC), 
average production (Avg Production) and, finally, the average governmental support.  
\begin{table}[ht!]
\doublerulesep 0.1pt
\tabcolsep 7.8mm
\centering
\caption{\rm Average values comparison (in Euro and Kg, Production)
when the government provides, or not, free vineyard replacement.}
\vspace*{2mm}
\renewcommand{\arraystretch}{1.3}
\setlength{\tabcolsep}{22pt}
\footnotesize{\begin{tabular*}{16.5cm}{cccccc}
\hline\hline\hline
IHS type & Avg Yield & Avg RC & Avg Production & Avg Gov Support \\ \hline
(A) IHS - NRC & 13633 & 1738 & 42316 & 1738\\
(B) IHS - RC & 13027 & 1444 & 40985 & None\\
\hline\hline\end{tabular*}}
\renewcommand{\arraystretch}{1}
\label{table2}
\end{table}
The goal is to find the beneficial grape selling price ($Pu+a$, with $a$ being the benefit) 
that the government needs to provide in order to make the producer (B) match the average yield 
of producer (A). We note that the producers (A) and (B) are the same, since they both 
represent the sampled farm from Section~\ref{methodology}. The only difference among them 
is the optimization criteria. Regarding the producer (B) production, which remains the same, 
the government needs to provide an additional 0.1257 cents per Kg to make this producer match the other one. 
\begin{table}[ht!]
\doublerulesep 0.1pt
\tabcolsep 7.8mm
\centering
\caption{\rm Matched average yield from the producer with two different governmental policies.}
\vspace*{2mm}
\renewcommand{\arraystretch}{1.3}
\setlength{\tabcolsep}{22pt}
\footnotesize{\begin{tabular*}{16.5cm}{cccccc} \hline\hline\hline
IHS type & Avg Yield & Avg RC & Avg Production & Avg Gov Support \\ \hline
(A) IHS - NRC & 13633 & 1738 & 42316 & 1738 \\
(B') IHS - RC & 13633 & 1468 & 41343 & 5167 \\
(B) IHS - RC & 13633 & 1444 & 40985 & 5151 \\ \hline\hline
\end{tabular*}}
\renewcommand{\arraystretch}{1}
\label{table3}
\end{table}  

As stated on Table~\ref{table3}, the governmental costs of this policy are way higher 
(almost triple) than the vineyard replacement support. Assuming that the producer 
previously knows about the governmental benefit, his IHS \eqref{pf} shifts from 
the previous optimal to replace the vineyard a year earlier (58 years, labeled Producer (B')), 
but his option increases the production value making the governmental support per grape (Kg) 
decrease in order to match both producers average yield. Sub-sequentially, 
the smaller benefit makes the producer (B') shift again to the original optimal 
solution at 59 years replacement age. Nonetheless, we present both solutions 
on Table~\ref{table3}, even though they don't differ too much. 

The developed model is purely deterministic: no forms of uncertainty 
were considered. Nonetheless, adding a stochastic representation to the model 
might deliver a more realistic approach if random phenomena, like the weather 
(temperature and rainfall) and disease proneness, were featured 
in the objective function \eqref{eq:fo}.

Considering our problem formulation, and assuming that 
the farmers revenues depend directly on the produced grape quantity and quality, 
it is suggested that the government should provide direct and unequivocal vineyard replacement 
support instead of grape selling benefit. Assuming that the producer operates rationally 
and relies on a IHS, the replacement support induces a more prolific optimization, 
while the selling grape benefits are very unlikely to force optimal and incremental decisions 
from the producers, added to the fact that the government expenditure 
severely bloats with the latter tool.  


\section{Conclusions}
\label{conclusions}

\noindent We considered vineyard replacement activity as a crucial factor within 
the grape-growers optimization portfolio. The inner farm gross profit function 
was considered, strictly dependent from the produced grape quantity and quality functions, 
per each vineyard age alongside the fixed cost of replacing the vineyard. To obtain those 
functions, and simulate numerically a concrete illustrative example, 20 surveys were 
collected in the Portuguese Douro wine region. It was concluded that the producers 
optimization horizon plays an important role in the overall business profitability, 
since shorter forecasts usually lead to smaller income. An infinite horizon solution (IHS)
was also calculated based on the average profit function \eqref{pf}, even though this 
solution, due to its naiveness (the age to replacement is fixed), deprecates when 
the optimization period is shorter. It was found that the IHS still outperforms short decision 
planning (5 and 10 years) in an overall 60 year time-frame. The important remark is that 
if the producer is not able to acquaint a sufficiently large period (at least 15 years) 
when optimizing his vineyard replacement endeavours, it is preferable to rely purely on the IHS. 

Afterwards, governmental intervention was considered on producers that follow the IHS. 
Namely, we investigated two possible supporting tools: providing a grape selling price benefit 
or free vineyard replacement to the producer. The results were compared and evaluated. 
The second tool is the prominent method, because it induces the producer to seek freely 
the vineyard replacement age, maximizing his overall production without considering the fixed costs. 
To match the producers that receive a free replacement scenario (average profit), 
the government benefit to the grape selling price reveals itself way more expensive 
and roughly innocuous upon the producer optimization decision. 

Our model and study, even though based on real data, assumes and also omits many parameters 
that could be considered when developing a vineyard replacement strategy, which may also 
vary within different producers. For further developments, the database could be extended 
alongside the input variables on the objective function, to explain the problem more accurately 
and also to increase the spectrum of governmental policies 
(e.g., beneficial labour or provide mechanization).  
There is also room to attempt new robust optimization methods such 
as RCMARS, generally applied to optimize trade-off between 
risk and return in the financial market \cite{Ozmen:2016}. 
An analogous problem formulation may allow the junction of data-mining methods. 
On the other hand, there are other chances to model the problem 
with an increasing level of complexity. The independence assumed 
in our variables make the nonlinear binary control viable. 
However, if somehow it becomes valuable to entangle the quality 
and quantity variables and introduce stochastic effects, such us 
the weather or vineyard diseases, approaches such as bang-bang 
control optimization \cite{Bryson} and stochastic optimal control 
under a switching regime \cite{Temocin:2014,Azevedo:2014} 
might be useful tools for further endeavours. 


\section*{Acknowledgements}

\noindent This work was supported by the R\&D Project INNOVINE \& WINE 
-- Vineyard and Wine Innovation Platform -- Operation NORTE-01-0145-FEDER-000038, 
co-funded by the European and Structural Investment Funds (FEDER) and by Norte 2020 
(Programa Operacional Regional do Norte 2014/2020).
Torres was supported by FCT through CIDMA, project UID/MAT/04106/2013.

The authors are very grateful to two anonymous Reviewers for several suggestions, 
questions and remarks, which helped them to improve the paper.


\section*{Appendix}
\label{appendix}

\noindent We have used \textsf{Matlab R2017a} to fit our data with a 2nd degree polynomial,
with Figure~\ref{fig:1} showing the fitted function and the real productivity data.
Follows our \textsf{Matlab} code:
\begin{verbatim}
function [fitresult, gof] = createFit1(Age, GQ)
%CREATEFIT1(AGE,GQ)
%  Create a fit.
%
%  Data for 'MATLAB Fit' fit:
%      X Input : Age
%      Y Output: GQ
%  Output:
%      fitresult : a fit object representing the fit.
%      gof : structure with goodness-of fit info.
%

[xData, yData] = prepareCurveData( Age, GQ );

% Set up fittype and options.
ft = fittype( 'poly2' );
opts = fitoptions( 'Method', 'LinearLeastSquares' );
opts.Robust = 'LAR';

% Fit model to data.
[fitresult, gof] = fit( xData, yData, ft, opts );

% Plot fit with data.
figure( 'Name', 'MATLAB Fit' );
h = plot( fitresult, xData, yData );
legend( h, 'GQ vs. Age', 'MATLAB Fit', 'Location', 'NorthEast' );
% Label axes
xlabel Age
ylabel GQ
grid on

Results of Fitting:

Linear model Poly2:
f(x) = p1*x^2 + p2*x + p3
Coefficients (with 95% confidence bounds):
p1 =      -6.774  (-8.864, -4.685)
p2 =       451.1  (332.2, 570)
p3 =      -661.4  (-1674, 351.1)

Goodness of fit:
SSE: 2.678e+07
R-square: 0.8051
Adjusted R-square: 0.7822
RMSE: 1255
\end{verbatim}


To relate the quality proxy $GQ$ \eqref{eq:rev} with the vineyard age,
and to do a bootstrap re-sampling of 500 simulations 
to estimate standard errors, confidence intervals, and to obtain 
Figure~\ref{fig:2}, we have used 
\textsf{Python 3.5.3} with the following code:
\begin{verbatim}
import pandas as pd
import numpy as np
from cairocffi import *
import matplotlib
matplotlib.use('Gtk3Agg')
import matplotlib.pyplot as plt
df = pd.read_excel('___.xlsx')   #UPLOAD THE DATABASE FILE
age = df['age']
quality = df['quality']
from sklearn.linear_model import LinearRegression
X = np.vstack([age, np.ones(len(age))]).T
plt.figure(figsize=(12,8))
for i in range(0,500):
sample_index = np.random.choice(range(0, len(Bundle)), len(Bundle))
X_Samples = X[sample_index]
Y_Samples = Bundle[sample_index]
lr = LinearRegression()
lr.fit(X_Samples,Y_Samples)
plt.plot(Idade, lr.predict(X), color='grey', alpha=0.2, zorder=1)
plt.scatter(Idade,Bundle, marker='o', color='orchid', zorder=4 )

lr = LinearRegression()
lr.fit(X, Bundle)
LinearRegression(copy_X=True, fit_intercept=True, n_jobs=1, normalize=False)
plt.plot(Idade, lr.predict(X), color='red', zorder=5)
plt.xlabel('Age')
plt.ylabel('Quality Proxy')
plt.savefig("/home/pi/Desktop/Anibal_Docs/scatter3.png", dpi=125)

import statsmodels.formula.api as smf
results = smf.ols('Bundle ~ Idade',data=df).fit()
print(results.summary())

OLS Regression Results                            
==============================================================================
Dep. Variable:                 Bundle   R-squared:                       0.051
Model:                            OLS   Adj. R-squared:                 -0.054
Method:                 Least Squares   F-statistic:                    0.4849
Date:                Thu, 12 Apr 2018   Prob (F-statistic):              0.504
Time:                        09:51:11   Log-Likelihood:                 8.0719
No. Observations:                  11   AIC:                            -12.14
Df Residuals:                       9   BIC:                            -11.35
Df Model:                           1                                         
Covariance Type:            nonrobust                                         
==============================================================================
coef    std err          t      P>|t|      [0.025    0.975]
------------------------------------------------------------------------------
Intercept      0.5858      0.078      7.468      0.000       0.408       0.763
Idade          0.0036      0.005      0.696      0.504      -0.008       0.015
==============================================================================
Omnibus:                        2.482   Durbin-Watson:                   2.092
Prob(Omnibus):                  0.289   Jarque-Bera (JB):                1.450
Skew:                          -0.651   Prob(JB):                        0.484
Kurtosis:                       1.788   Cond. No.                         31.0
==============================================================================
\end{verbatim}


The numerical simulations reported in Section ``Results and Discussion''
were performed in the \textsf{Xpress-Mosel} multi-solver modeling 
and problem solving environment. 
Follows our code for the first simulation of the producer 
with a time-frame of 5 years (the code for the other
simulations is similar):
\begin{verbatim}
model OptimizationVine
uses "mmxnlp"

declarations
nT=4 		!n year planning
T=0..nT		!planing horizon 
nd=5   	    !total number of plots
P=1..nd 	!plot set
qc=0.0036   !quality constant
p0=-661.4   !quantity constant 
p1=451.1
p2=-6.774
Pu=3        !grape selling price (kg)
S=10000     !vineyard replacement cost (per hectare)

Ii: array(P) of integer
A: array(P) of real

u: array(P,T) of mpvar
Id: array(P,T) of mpvar

end-declarations

Ii::[20,30,11,5,58] !Each plot age at 2017 (or T=0)
A::[4.47,1.45,0.44,1.6,0.5] !Each plot area

forall(j in P,t in T) do
create(u(j,t))    				
u(j,t) is_binary
end-do

forall(j in P,t in T) do
create(Id(j,t))    				
Id(j,t) is_integer
end-do

!ORIGINAL
objective:= sum(t in T,j in P) 
(qc*Id(j,t)*A(j)*Pu*(p2*(Id(j,t))^2+p1*Id(j,t)+p0)-S*A(j)*u(j,t))

!constraints

forall(j in P) do
sum(t in T) u(j,t) <= 1
end-do

forall(j in P,t in T) do
Id(j,t)=t+Ii(j)-sum(l in 0..t-1) (u(j,l)*(Id(j,l)+1))    	
end-do

setparam("XPRS_verbose", true);
maximize(objective);

forall(j in P,t in T) do
if(getsol(u(j,t))<>0)
then writeln("Plot ",j," is replaced at period ",t," u(",j,",",t,")=",getsol(u(j,t)))
end-if 	!writes only if a plot is replaced
end-do 

forall(j in P,t in T) do
writeln("Plot age ",j," at period ",t," ID(",j,",",t,")=",getsol(Id(j,t)))
end-do                   !retrieves the age of each plot per period

forall(t in T) do
writeln("Yield of period ",t," =", sum(j in P) 
(qc*getsol(Id(j,t))*A(j)*Pu*(p2*(getsol(Id(j,t)))^2
+p1*getsol(Id(j,t))+p0)-S*A(j)*getsol(u(j,t))))    
end-do       !farm yield per period

writeln("Total Profit = ", getobjval, "\n")  !overall profit

end-model
\end{verbatim}




\begin{thebibliography}{99}

\bibitem{1} 
International Organisation of Vine and Wine (OIV) (2017) 
World Vitivniculture Situation, 
OIV Statistical Report on World Vitiviniculture,
\url{http://www.oiv.int/public/medias/5479/oiv-en-bilan-2017.pdf}, 
accessed 2018-05-15.

\bibitem{2}
Marta-Costa, A. (2010), 
Application of decision support methods for sustainable agrarian systems, 
\textit{New Medit} \textbf{9}(2), 42--49.

\bibitem{3}
Marta-Costa, A., Martinho, V. and Santos, M. (2017), 
Productive efficiency of Portuguese vineyard regions, 
\textit{Regional Science Inquiry} \textbf{9}(3), 97--107.

\bibitem{4}
France, J. and Thornley, J. (1984), 
\textit{Mathematical Models in Agriculture}, 
Butterworths, London.

\bibitem{5}
Matsuyama, K. (1992), 
Agricultural productivity, comparative advantage, and economic growth, 
\textit{Journal of Economic Theory} \textbf{58}(2), 317--334.

\bibitem{6}
Santos, J.G. and C\^andido, G. A. (2013), 
Sustentabilidade e agricultura familiar: 
um estudo de caso em uma associa\c{c}\~{a}o de agricultores rurais, 
\textit{Revista de Gest\~{a}o Social e Ambiental} \textbf{7}(1), 70--86.

\bibitem{7}
Masini, G., Petracci, N., Bandoni, A., (2003),
Supply chain planning optimization in the fruit industry, 
\textit{Proceedings Foundations of Computer-Aided Process Operations} 
(FOCAPO2003), 237--240.

\bibitem{8}
Troncoso, J. J. and Garrido, R. A. (2005), 
Forestry production and logistics planning:
an analysis using mixed-integer programming, 
\textit{Forest Policy and Economics} \textbf{7}, 625--633.

\bibitem{9}
Fonseca, T. F., Cerveira, A. and Mota, A. (2012), 
An integer programming model for a forest harvest problem in Pinus pinaster stands, 
\textit{Forest Systems} \textbf{21}(2), 272--283.

\bibitem{10}
Jena, S. D. and Poggi, M. (2013), 
Harvest planning in the Brazilian sugar cane industry via mixed integer programming, 
\textit{European Journal of Operational Research} \textbf{230}, 374--384.

\bibitem{11}
Ferrer, J. C., Cawley, A. M., Maturana, S., Toloza, S. and Vera, J. (2008), 
An optimization approach for scheduling wine grape harvest operations, 
\textit{Int. J. Production Economics} \textbf{112}, 985--999.

\bibitem{12}
Marta-Costa, A., Manso, F., Tib\'{e}rio, L. and Fonseca, C. (2013), 
\textit{Methods and procedures for building sustainable farming systems}, 
Springer, Dordrecht, 207--217.

\bibitem{13}
Groot, J. C. J., Oomen, G. J. M. and Rossing, W. A. H. (2012), 
Multi-objective optimization and design of farming systems, 
\textit{Agricultural Systems} \textbf{110}, 63--77.

\bibitem{14}
Klein, T., Holzkamper, A., Calanca, P., Seppelt, R. and Fuhrer, J. (2013), 
Adapting agricultural land management to climate change: 
a regional multi-objective optimization approach, 
\textit{Landscape Ecol.} \textbf{28}, 2029--2047.

\bibitem{15}
Banasik, A., Kanellopoulos, A., Claassen, G. D. H., Bloemhof-Ruwaard, J. M. and van der Vorst, J.~G.~A.~J.~(2017), 
Closing loops in agricultural supply chains using multi-objective optimization: 
A case study of an industrial mushroom supply chain, 
\textit{Int. J. Production Economics} \textbf{183}, 409--420.

\bibitem{16}
Millar, J. G., Daane, K. M., Mcelfresh, J. S., Moreira, J. A, Malakar-Kuenen, R., Guill\'en, M. and Bentley, W. J. (2002), 
Development and Optimization of Methods for Using Sex Pheromone for Monitoring the Mealybug Planococcus ficus 
(Homoptera: Pseudococcidae) in California Vineyards, 
\textit{Journal of Economic Entomology} \textbf{95}(4), 706--714.

\bibitem{17}
Garc\'{\i}a-D\'{\i}az, A., Allas, R. B., Gristina, L., Cerd\`{a}, A., Pereira, P. and Novara, A. (2016), 
Carbon input threshold for soil carbon budget optimization in eroding vineyards, 
\textit{Geoderma} \textbf{271}, 144--149.

\bibitem{18}
Patakas, A., Noitsakis, B. and Chouzouri, A. (2005), 
Optimization of irrigation water use in grapevines using the relationship 
between transpiration and plant water status, 
\textit{Agriculture, Ecosystems and Environment} \textbf{106}, 253--259.

\bibitem{19}
Atallah, S. S., G\'omez, M. I., Fuchs, M. F. and Martinson, T. E. (2011), 
Economic impact of grapevine leafroll disease on \textit{vitis vinifera} 
cv. cabernet, \textit{Working Paper} \textbf{49}, 1127--1141.

\bibitem{20}
Ricketts, K. D., Gomez, M. I., Atallah, S. S., Fuchs, M. F., Martinson, T. E., Battany, M. C., 
Bettiga, L. J., Cooper, M. L., Verdegaal, P. S. and Smith, R. J. (2015), 
Reducing the economic impact of grapevine leafroll disease in California: 
Identifying optimal disease management strategies, 
\textit{American Journal of Enology and Viticulture} \textbf{62}(2), 138--147.

\bibitem{21}
Howitt, R. E. (1982), 
Multiperiod Optimization: Dynamic Programming vs. Optimal Control: Discussion, 
\textit{Western Agricultural Economics Association} \textbf{7}(2), 413--417.

\bibitem{22}
Anita, S., Arnautu, V. and Capasso, V. (2010), 
\textit{An Introduction to Optimal Control. Problems in Life Sciences and Economics. 
From Mathematical Models to Numerical Simulation with MATLAB}, Birkh\"auser, 
New York.

\bibitem{22b}
Galindro, A., Torres, D. F. M. (2018),
A simple mathematical model for unemployment: a case study in Portugal with optimal control,
\textit{Stat. Optim. Inf. Comput.} \textbf{6}(1), 116--129.  
{\tt arXiv:1801.00058}

\bibitem{23}
Denysiuk, R., Silva, C. J., Torres, D. F. M. (2018),
Multiobjective optimization to a TB-HIV/AIDS coinfection optimal control problem, 
\textit{Comput. Appl. Math.} \textbf{37}(2), 2112--2128. 
{\tt arXiv:1703.05458}

\bibitem{24} 
Rosa, S., Torres, D. F. M. (2018),
Parameter estimation, sensitivity analysis and optimal control 
of a periodic epidemic model with application to HRSV in Florida, 
\textit{Stat. Optim. Inf. Comput.} \textbf{6}(1), 139--149. 
{\tt arXiv:1801.09634}

\bibitem{25} 
Hajiabadi, M. E. and Mashhadi, H. R. (2013), 
Modeling of the Maximum Entropy Problem as an Optimal Control Problem 
and its Application to pdf Estimation of Electricity Price, 
\textit{Iranian Journal of Electrical \& Electronic Engineering} \textbf{9}(3), 150--157.

\bibitem{26} 
Han, J. and Weinan, E. (2016), 
Deep Learning Approximation for Stochastic Control Problems, 
\textit{Deep Reinforcement Learning Workshop}, NIPS 2016, 9~pp.

\bibitem{27} 
Taylor, C. M. and Hastings, A. (2004), 
Finding optimal control strategies for invasive species:
a density-structured model for \textit{Spartina alterniflora}, 
\textit{Journal of Applied Ecology} \textbf{41}, 1049--1057.

\bibitem{27b}
Silva, C. J., Torres, D. F. M., Venturino, E. (2017),
Optimal spraying in biological control of pests,
\textit{Math. Model. Nat. Phenom.} \textbf{12}(3), 51--64.  
{\tt arXiv:1704.04073}

\bibitem{28} 
Chalabi, Z. S., Bailey, B. J. and Wilkinson, D. J. (1995), 
A real-time optimal control algorithm for greenhouse heating, 
\textit{Computers and Electronics in Agriculture} \textbf{15}, 1--13.

\bibitem{29} 
Schamel, G. H. and Schubert, S. F. (2016), 
An optimal control model of crop thinning in viticulture, 
\textit{BIO Web of Conferences}, \textbf{7}(03022), 6~pp.
\url{https://doi.org/10.1051/bioconf/20160703022}

\bibitem{30} 
Silva, A. F., Marins, F. A. S. and Montevechi, J. A. B. (2013), 
Application of mixed binary goal programming in an enterprise in the sugar and energy sector, 
\textit{Gest\~{a}o e Produ\c{c}\~{a}o} \textbf{20}(2), 321--336.

\bibitem{31} 
Arredondo-Ram\'{\i}rez, K., Rubio-Castro, E., N\'apoles-Rivera, F., Ponce-Ortega, J. M., 
Serna-Gonz\'alez, M. and El-Halwagi, M. M. (2015), 
Optimal design of agricultural water systems with multiperiod collection, storage, and distribution, 
\textit{Agricultural Water Management}, \textbf{152}, 161--172.

\bibitem{Mosel:32}
Xpress-Mosel,
\url{https://www.swmath.org/software/4911}

\bibitem{Ozmen:2016}
\"Ozmen, A. (2016), 
\textit{Real-world application with our robust tools}. 
In: Robust Optimization of Spline Models and Complex Regulatory Networks. 
Contributions to Management Science. Springer, Cham.

\bibitem{Bryson}
Bryson, A. E., Ho, Y. C. (1975), 
\textit{Applied optimal control}. 
Hemisphere Publishing Corp., Washington, DC.

\bibitem{Temocin:2014}
Temo\c{c}in, B. Z., Weber, G.-W. (2014),
Optimal control of stochastic hybrid system with jumps: a numerical approximation. 
\textit{J. Comput. Appl. Math.} \textbf{259}, part B, 443--451.

\bibitem{Azevedo:2014}
Azevedo, N., Pinheiro, D., Weber, G.-W. (2014), 
Dynamic programming for a Markov-switching jump-diffusion. 
\textit{J. Comput. Appl. Math.} \textbf{267}, 1--19.

\end{thebibliography}
\end{document}